\newtheorem{thm}{Theorem}[section]
\newtheorem{lem}[thm]{Lemma}
\newtheorem{cor}[thm]{Corollary}
\newtheorem{prop}[thm]{Proposition}
\DeclareMathAlphabet{\mathpzc}{OT1}{pzc}{m}{it}
\numberwithin{equation}{section}
\newcommand{\R}{\mathbb{R}}
\newcommand{\ve}{\varepsilon}
\newcommand{\rd}{\mathrm{d}}
\newcommand{\beqn}{\begin{equation}}
\newcommand{\eeqn}{\end{equation}}
\newcommand{\bqnn}{\begin{equation*}}
\newcommand{\eqnn}{\end{equation*}}
\newcommand{\bear}{\begin{eqnarray}} 
\newcommand{\eear}{\end{eqnarray}} 
\newcommand{\bean}{\begin{eqnarray*}} 
\newcommand{\eean}{\end{eqnarray*}} 
\newcommand{\bs}{\begin{split}}
\newcommand{\es}{\end{split}}
\newcommand{\eps}{\varepsilon}
\newcommand{\dhr}{\mathrel{\lhook\joinrel\relbar\kern-.8ex\joinrel\lhook\joinrel\rightarrow}}
\title[Thin Film Equations with Soluble Surfactant]
{Thin Film Equations with Soluble Surfactant and Gravity: Modeling and Stability of Steady States \footnote{Partially supported by the french-german PROCOPE project 20190SE}}
\author[Escher, Hillairet, Lauren\c{c}ot, Walker]{Joachim Escher, Matthieu Hillairet, Philippe Lauren\c{c}ot, Christoph Walker}
\address{Institut de Math\'{e}matiques de Toulouse, CNRS UMR~5219, Universit\'{e} de Toulouse, F--31062 Toulouse cedex 9, France.}
\email{matthieu.hillairet@math.univ-toulouse.fr}
\email{philippe.laurencot@math.univ-toulouse.fr}
\address{Leibniz Universit\"at Hannover, Institut f\"ur Angewandte Mathematik, Welfengarten 1, D--30167 Hannover, Germany.}
\email{escher@ifam.uni-hannover.de}
\email{walker@ifam.uni-hannover.de}
\begin{document}

\begin{abstract}
A thin film on a horizontal solid substrate and coated with a soluble surfactant is considered. The governing degenerate parabolic equations for the film height and the surfactant concentrations on the surface and in the bulk are derived using a lubrication approximation when gravity is taken into account. It is shown that the steady states are asymptotically stable.
\end{abstract}

\keywords{Thin films, lubrication approximation, degenerate parabolic equations, linearized stability.
\\
{\it Mathematics Subject Classifications (2010)}: 76A20, 76D08, 35Q35, 35B35, 35K51.}

\maketitle

\section{Introduction}

The dynamics of a thin liquid film on a solid substrate has attracted considerable interest in the past, allowing many applications e.g. in physics, engineering, or biomedicine (see \cite{Grotberg, GrotbergJensen92, GrotbergJensen93, LinHwang00, LinHwang02, WarnerCraster} and the references therein). When such a film is coated with a distribution of surfactant, i.e. surface active agents that lower the surface tension of the liquid, the resulting gradients of surface tension induce so-called Marangoni stresses (e.g. \cite{WarnerCraster}) that cause the surfactant to spread. From the point of view of potential applications, estimates on the velocity of the spreading front and possible thinning and rupture of the film are among the main issues under investigation, see \cite{LinHwang00} and the references therein. While most part of the research hitherto has been dedicated to the case of insoluble surfactant (e.g., see \cite{deWitGallez, SharmaRuckenstein86} and, for further references, see \cite{LinHwang00, LinHwang02}), this article focuses on the actually more common case of soluble surfactants when there are sorptive fluxes between the surface and the bulk region immediately beneath the surface \cite{GrotbergJensen93, LinHwang00, LinHwang02, Matar02, SharmaRuckenstein86, WarnerCraster}. Sorption seems to play an important role during the early stages of the spreading. For instance, soluble surfactants may restrain the spreading of liquid films \cite{GrotbergJensen93} or delay film rupture \cite{SharmaRuckenstein86}. Besides including sorptive fluxes we take into account also the effect of gravity but neglect effects of capillarity and van der Waals forces.
Based on the full free boundary problem for the Navier-Stokes equations and the mass balance equations governing the dynamics of the thin liquid and the surfactant concentrations, respectively, we derive a system of degenerate parabolic equations for these quantities by using a lubrication approximation as, e.g., in \cite{GrotbergJensen92, GrotbergJensen93}. 
Cross-sectional averaging then allows one to neglect vertical components of the bulk surfactant concentration so that the resulting coupled system consists of one-dimensional equations. 

In dimensionless form the system reads:
\begin{align}
&\partial_t h-\partial_x\left(\frac{G}{3}\,h^3\,\partial_x h-\frac{1}{2}\, h^2\,\partial_x\sigma(\Gamma)\right)=0\ ,\label{1}\\
&\partial_t(h\,C_0)-\partial_x\left(\frac{G}{3}\,h^3\, C_0\,\partial_x h-\frac{1}{2}\,h^2\,C_0\,\partial_x\sigma(\Gamma)+\delta\, h\,\partial_x C_0\right)=-\beta\, K\, (C_0-\Gamma)\ ,\label{2}\\
&\partial_t\Gamma-\partial_x\left(\frac{G}{2}\,h^2\,\Gamma\,\partial_xh-h\,\Gamma\,\partial_x\sigma(\Gamma)+D\,\partial_x\Gamma\right)=K\, (C_0-\Gamma)\label{3}
\end{align}
for $x\in (0,L)$ and $t>0$ subject to suitable initial conditions at $t=0$:
\beqn\label{4a}
h(0,\cdot)=h^0\ ,\quad C_0(0,\cdot)=C_0^0\ ,\quad \Gamma(0,\cdot)=\Gamma^0\ ,
\eeqn
and no-slip boundary conditions at $x=0$ and $x=L$:
\beqn\label{4}
\partial_x h=\partial_x C_0=\partial_x\Gamma=0\ ,\quad x=0, L\ .
\eeqn
Here, $h$ denotes the film thickness, $\Gamma$ denotes the surface surfactant concentration, and $C_0$ is the (cross-sectionally averaged) bulk surfactant concentration. The constant $\beta>0$ indicates the degree of solubility so that in the limit $\beta\rightarrow 0$ the surfactant is highly soluble in the substrate and adsorbs  weakly on the surface, while $\beta\rightarrow\infty$ corresponds to a surfactant accumulating preferably on the surface with weak bulk solubility. The surface tension $\sigma(\Gamma)$ is a decreasing function of the surface surfactant concentration $\Gamma$. The positive constants $D$ and $\delta$ measure molecular diffusion of surfactants on the surface and in the bulk, respectively. For more information on the model we refer to Section~\ref{sec2}.

While the derivation of models, both for insoluble and soluble surfactant, can be found in many research papers (the latter, however, merely in the situation when gravity is neglected), there seems to be only few literature investigating thin film equations with surfactant from a mathematical analytical point of view. In addition, to the best of our knowledge all research so far is dedicated to the insoluble surfactant case, i.e. when the bulk surfactant concentration $C_0$ in equations \eqref{1}-\eqref{3} (and possibly other terms) is neglected. For instance, in \cite{Renardy96a, Renardy96b, Renardy97} local existence results can be found for the resulting systems of degenerate equations for $h$ and $\Gamma$. In \cite{GarckeWieland} global existence of weak solutions is shown for a variant of \eqref{1}-\eqref{3} (with $C_0$ neglected) including fourth order terms in $h$ modeling capillarity effects. This fourth order degenerate parabolic thin film equation for $h$
has been rigorously derived recently from Stokes flow with surface tension not using a lubrication approximation,
cf. \cite{GP08}. Thin films on a inclined substrate and the influence of various parameters on the behavior of traveling wave solutions are investigated in \cite{Shearer1, Shearer2, Shearer3}. We also refer to \cite{BarrettGarcke, GruenLenz} for finite volume methods and numerical simulations for thin liquid films with insoluble surfactant. We shall point out that research has also been dedicated to the mathematical analysis for the free boundary problem for two-phase flows with soluble surfactant, see \cite{Bothe1, Bothe2}. Finally, we refer to the forthcoming paper \cite{EHLW} where existence of global weak solutions will be shown for the case of insoluble surfactant under moderate assumptions on the surface tension coefficient.

The outline of this article is as follows. In Section \ref{sec2} we provide a derivation of the governing equations \eqref{1}-\eqref{4}. In Section \ref{sec3} we begin our mathematical analysis of these equations. We establish a local existence result for smooth solutions and provide an energy function which shows that the only steady states are constants. Based on the principle of linearized stability we then show in Section \ref{sec4} that steady states with small surfactant concentrations are locally asymptotically stable.

\section{The Physical Model}\label{sec2}

The derivation of the governing equations for the dynamics of the liquid thin films with surfactant using lubrication theory can be found in many papers for various situations, see, e.g., \cite{GrotbergJensen92, GrotbergJensen93, LinHwang00, LinHwang02, Matar02, WarnerCraster} to name a few. The aim of this section is to provide a brief sketch of the derivation of the model equations \eqref{1}-\eqref{4} for a soluble surfactant including gravity that we have not found in the literature in this particular form. For this, we follow \cite{GrotbergJensen92, GrotbergJensen93} to which we also refer for further details. 

We shall consider a thin film of a viscous incompressible Newtonian fluid lying on a horizontal plane. Initially a monolayer of soluble surfactant is deposited on the surface of the film. The ratio $\ve$ of the undisturbed height compared to the initial length of the film is taken to be small so that lubrication theory may be used. In the following, all variables are non-dimensionalized. Let $x$ and $z$ denote the horizontal and vertical coordinate, respectively. Let $(u(t,x,z), w(t,x,z))$ denote the corresponding velocity field of the fluid. The free surface of the fluid layer is at $z=h(t,x)$ while $\Gamma(t,x)$ and $C(t,x,z)$ denote the monolayer concentration and the bulk concentration of the surfactant, respectively, see Figure~\ref{fig:tf}.
\begin{figure}[h!]
\begin{minipage}[t]{1\linewidth}
\centering\includegraphics[width=10cm]{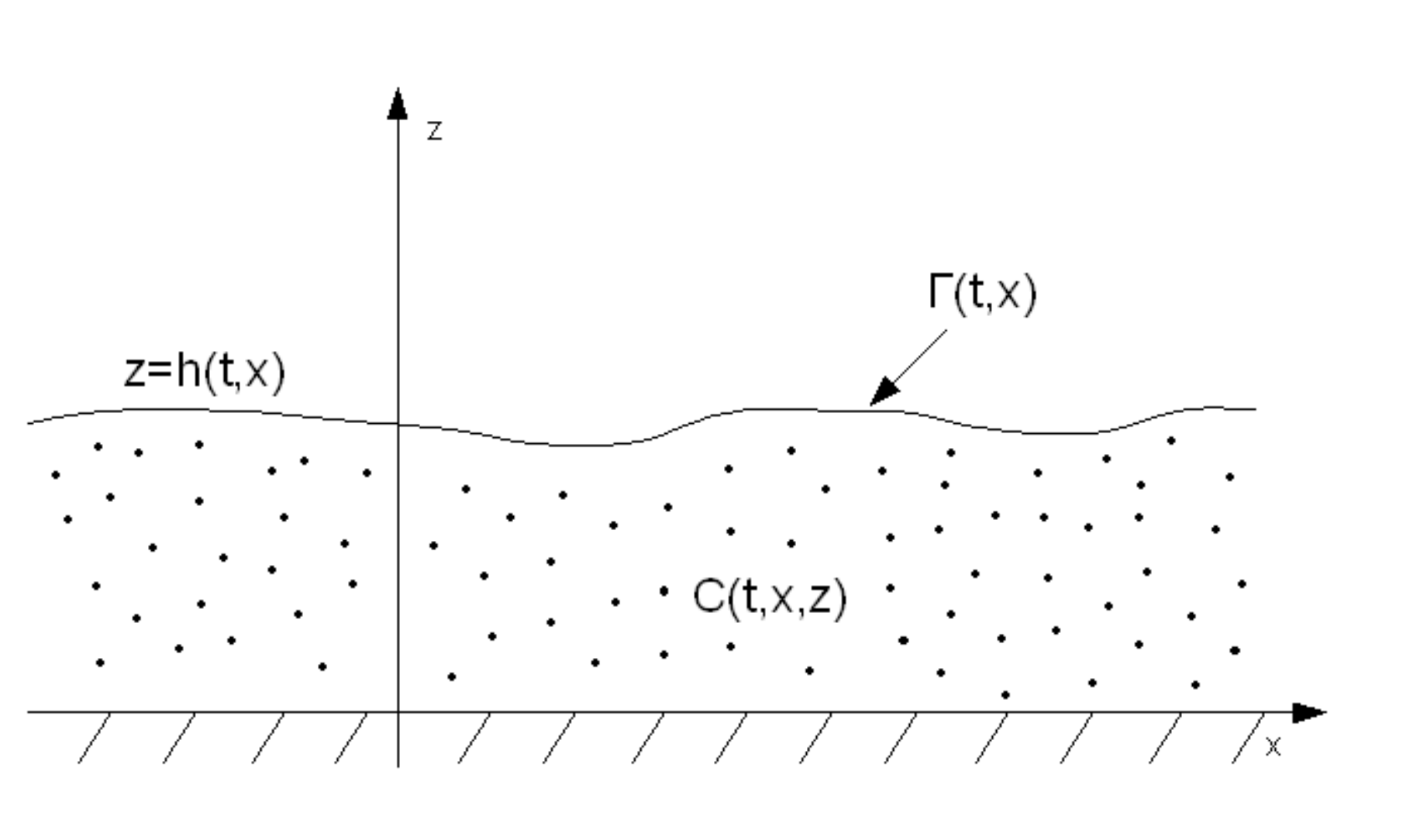}
\caption{\small Schematic representation of a thin film with soluble surfactant.}\label{fig:tf}
 \end{minipage} \hfill
\end{figure}
Conservation of mass of the fluid reads
\beqn\label{5}
\partial_xu(t,x,z)+\partial_z w(t,x,z)=0\ ,
\eeqn
while the equations of momentum conservation are given by
\begin{align}
&-\partial_x p(t,x,z)+\partial_z^2 u(t,x,z)=0\ ,\label{6}\\
&-\partial_z p(t,x,z)-G =0\ ,\label{7}
\end{align}
where $p$ is the pressure in the liquid and $G>0$ represents a gravitational force directed vertically downwards. In \eqref{6}, \eqref{7} we neglect van der Waals forces and capillarity effects, the latter assumption being valid if $\sigma_0/S= O(1)$, where $S$ is the spreading coefficient and $\sigma_0$ is the surface tension of the uncontaminated interface, see \cite{GrotbergJensen93}. At the bottom plane $z=0$, no-slip boundary conditions are imposed:
\beqn\label{8}
u(t,x,0)=w(t,x,0)=0\ ,
\eeqn
whereas at the interface $z=h(t,x)$ the kinematic boundary condition 
\beqn\label{9}
\partial_t h(t,x)+u\big(t,x,h(t,x)\big)\partial_x h(t,x)=w\big(t,x,h(t,x)\big)
\eeqn
is prescribed meaning that at the free surface the speed of the interface balances the normal component of the liquid velocity. Assuming zero pressure above the film and again neglecting effects of capillarity, the interface condition at $z=h(t,x)$ is complemented with a normal and tangential stress condition reading
\beqn\label{10}
p\big(t,x,h(t,x)\big)=0
\eeqn
and
\beqn\label{11}
\partial_z u\big(t,x,h(t,x)\big)=\partial_x \sigma\big(\Gamma(t,x)\big)\ .
\eeqn
Here $\sigma(\Gamma)$ denotes the surface tension of the liquid related to the local surface surfactant concentration. Examples of constitutive relations (in dimensionless form) for the surface tension include $\sigma(\Gamma)=1-\Gamma$ or 
$$
\sigma(\Gamma)= (\alpha+1)\ \left( 1 - \left[ ((\alpha+1)/\alpha)^{1/3} - 1 \right]\ \Gamma \right)^{-3}\,,
$$
see \cite{deWitGallez,GrotbergJensen92,GrotbergJensen93,WarnerCraster}. The hydrodynamics is supplemented by mass transfer of the soluble surfactant which is supposed to be deposited initially at the surface of the interface as a monolayer. The dynamics of the surfactant surface concentration $\Gamma$ at the free surface $z=h(t,x)$ is governed by the advection-transport equation
\beqn\label{12}
\partial_t\Gamma(t,x)+\partial_x\big(u\big(t,x,h(t,x)\big)\,\Gamma(t,x)\big)=D\,\partial_x^2\Gamma(t,x)+K\,\big(C_s(t,x)-\Gamma(t,x)\big)
\eeqn
with $D>0$ being a non-dimensional surface diffusion coefficient (independent of the surfactant concentration). On the right-hand side of \eqref{12}, 
$K\,\big(C_s(t,x)-\Gamma(t,x)\big)$ with
\beqn\label{13}
C_s(t,x)=C\big(t,x,h(t,x)\big)
\eeqn
is the (scaled) diffusion controlled sorptive flux of surfactant between the surface and the region of the bulk immediately beneath the surface, where $C$ denotes the surfactant concentration in the bulk. The constant $K$ is the ratio of the time scale of the flow to the time scale of desorption. The transport equation for the bulk surfactant concentration reads
\beqn\label{14}
\partial_t C(t,x,z)+\partial_x(u\,C)(t,x,z)+\partial_z(w\,C)(t,x,z)=\delta\,\partial_x^2 C(t,x,z)+\delta_V\,\partial_z^2 C(t,x,z)\ ,
\eeqn
the positive numbers $\delta$ and $\delta_V$ in \eqref{14} representing the non-dimensional horizontal and vertical bulk diffusivities independent of the surfactant concentration. Equation \eqref{14} is supplemented with the boundary conditions
\beqn\label{15}
\delta_V\,\partial_z C\big(t,x,h(t,x)\big)-\delta\, \partial_x h(t,x)\,\partial_x C\big(t,x,h(t,x)\big)=-\beta\, K\, \big(C_s(t,x)-\Gamma(t,x)\big)
\eeqn
at the surface $z=h(t,x)$ and
\beqn\label{16}
\partial_z C(t,x,0)=0
\eeqn
at the bottom $z=0$. The constant $\beta>0$ appearing in \eqref{15} is proportional to the ratio of the rate of adsorption to the rate of desorption, and indicates the degree of solubility. The limit $\beta\rightarrow 0$ corresponds to the case of a surfactant with high substrate solubility only weakly adsorbing at the free surface, while in the limit $\beta\rightarrow\infty$ the surfactant has high surface adsorption and is only weakly soluble in the bulk. On the vertical boundaries $x=0$ and $x=L$ we assume that there is no flux of surfactant, i.e. 
\beqn\label{17}
\partial_x\Gamma(t,x)=\partial_x C(t,x,z)=0\ ,\quad x=0,\, L\ .
\eeqn

Next, we shall compute $u$, $w$, and $p$ from equations \eqref{6}-\eqref{17} and then use cross-sectional averaging for $C$ in order to derive the one-dimensional model equations \eqref{1}-\eqref{3}. 

Notice first that \eqref{7} and \eqref{10} yield
$$
p(t,x,z)=G\,\big(h(t,x)-z\big)\ .
$$
Together with \eqref{6} this then implies that
$$
\partial_z u(t,x,h(t,x))-\partial_zu(t,x,z)=G\,\partial_x h(t,x)\,\big(h(t,x)-z\big)\ ,
$$
and thus, invoking \eqref{11}, we obtain that
$$
\partial_z u(t,x,z)=\partial_x\sigma(\Gamma(t,x))-G\,\partial_xh(t,x)\,\big(h(t,x)-z\big)\ ,
$$
from which, in turn, we deduce by \eqref{8} that
\beqn\label{18}
u(t,x,z)=z\,\partial_x\sigma(\Gamma(t,x))-G\,\partial_x h(t,x)\,\left(z\,h(t,x)-\dfrac{z^2}{2}\right)\ .
\eeqn
Integrating \eqref{5} and using \eqref{8} we see that
\beqn\label{18b}
\int_0^{h(t,x)}\partial_x u(t,x,z)\,\rd z = -w(t,x,h(t,x))\ ,
\eeqn
whence, by \eqref{9},
\beqn\label{18c}
\partial_t h(t,x)+\partial_x\left(\int_0^{h(t,x)} u(t,x,z)\, \rd z\right)=0\ .
\eeqn
Recalling \eqref{18} we derive the evolution equation \eqref{1} for the film thickness (for $t>0$ and $0<x<L$):
\beqn\label{19}
\partial_t h-\partial_x\left(\frac{G}{3}\,h^3\,\partial_x h-\frac{1}{2}\, h^2\,\partial_x\sigma(\Gamma)\right)=0\ .
\eeqn
Inserting next \eqref{18} evaluated at $z=h(t,x)$ into \eqref{12}, the transport equation for the surface surfactant concentration becomes (for $t>0$ and $0<x<L$)
\beqn\label{20}
\partial_t\Gamma-\partial_x\left(\frac{G}{2}\,h^2\,\Gamma\,\partial_xh-h\,\Gamma\,\partial_x\sigma(\Gamma)\right)=D\,\partial_x^2\Gamma+K\,(C_s-\Gamma)\ .
\eeqn
We now introduce the cross-sectional average of $C$ defined by
$$
C_0(t,x) := \frac{1}{h(t,x)}\ \int_0^{h(t,x)} C(t,x,z)\, \rd z\ .
$$
It follows from \eqref{14} and \eqref{16} that
\begin{eqnarray*}
\partial_t (hC_0)(t,x) & = & \int_0^{h(t,x)} \partial_t C(t,x,z)\, \rd z + C\big(t,x,h(t,x)\big)\ \partial_t h(t,x) \\
& = & - \int_0^{h(t,x)} \partial_x (uC)(t,x,z)\, \rd z + C\big(t,x,h(t,x)\big)\ \big( - w(t,x,h(t,x)) + \partial_t h(t,x) \big) \\
&  &+\ \delta\ \int_0^{h(t,x)} \partial_x^2 C(t,x,z)\, \rd z + \delta_V\ \partial_z C\big(t,x,h(t,x)\big)\,.
\end{eqnarray*}
Owing to \eqref{9} and \eqref{15}, we obtain
\begin{eqnarray*}
\partial_t (hC_0)(t,x) & = & - \int_0^{h(t,x)} \partial_x (uC)(t,x,z)\, \rd z - (uC)\big(t,x,h(t,x)\big)\ \partial_x h(t,x) \\
&  &+\ \delta\ \int_0^{h(t,x)} \partial_x^2 C(t,x,z)\, \rd z + \delta\ \partial_x C\big(t,x,h(t,x)\big)\ \partial_x h(t,x) \\
&  &-\ \beta\, K\, \big(C_s(t,x)-\Gamma(t,x)\big) \\
& = & - \partial_x \left( \int_0^{h(t,x)} \big[ (uC)(t,x,z) - \delta\ \partial_x C(t,x,z) \big]\, \rd z \right) \\
&  &-\ \beta\, K\, \big(C_s(t,x)-\Gamma(t,x)\big)\,.
\end{eqnarray*}
At this stage, we may assume that, within the lubrication approximation, $C(t,x,z)$, $\partial_x C(t,x,z)$, and $C_s(t,x)$ are well approximated by $C_0(t,x)$, $\partial_x C_0(t,x)$, and $C_0(t,x)$, respectively, and we deduce from \eqref{18} that, for $t>0$ and $0<x<L$,
\beqn\label{23}
\partial_t(h\, C_0)-\partial_x\left(\frac{G}{3}\,h^3\, C_0\,\partial_x h-\frac{1}{2}\,h^2\,C_0\,\partial_x\sigma(\Gamma)+\delta\, h\,\partial_x C_0\right)=-\beta \,K\, (C_0-\Gamma)\ .
\eeqn
Therefore, gathering \eqref{19}, \eqref{20}, and \eqref{23} we obtain the model equations \eqref{1}-\eqref{4}.
Notice that
\beqn\label{24}
\frac{\rd}{\rd t}\int_0^L h(t,x)\,\rd x=0\ ,\qquad \frac{\rd}{\rd t}\int_0^L \left(\Gamma(t,x)+\frac{1}{\beta}\,h(t,x)\, C_0(t,x)\right)\,\rd x=0\ ,
\eeqn
that is, the mass of the fluid and the mass of the surfactant are conserved during time evolution, the factor $1/\beta$ accounting for the degree of solubility of the surfactant in the fluid. \\

For the mathematical analysis of \eqref{1}-\eqref{4} it is convenient to introduce 
a new variable \mbox{$m:=hC_0/\beta$} replacing $C_0$. Then the system \eqref{1}-\eqref{4} becomes (for non-vanishing $h$):
\begin{align}
&\partial_t h-\partial_x\left(\frac{G}{3}\,h^3\,\partial_x h-\frac{1}{2}\, h^2\,\partial_x\sigma(\Gamma)\right)=0\ ,\label{1A}\\
&\partial_tm-\partial_x\left(\frac{G}{3}\,h^2\, m\,\partial_x h-\frac{1}{2}\,h\, m\,\partial_x\sigma(\Gamma)+\delta\, \partial_x m-\delta\,\frac{m}{h}\,\partial_x h\right)=- K\, \left(\frac{\beta\,m}{h}-\Gamma\right)\ ,\label{2A}\\
&\partial_t\Gamma-\partial_x\left(\frac{G}{2}\,h^2\,\Gamma\,\partial_xh-h\,\Gamma\,\partial_x\sigma(\Gamma)+D\,\partial_x\Gamma\right)=K\, \left(\frac{\beta\,m}{h}-\Gamma\right)\label{3A}
\end{align}
for $x\in (0,L)$ and $t>0$ subject to the initial conditions at $t=0$,
\beqn\label{4A}
h(0,\cdot)=h^0\ ,\quad m(0,\cdot)=m^0\ ,\quad \Gamma(0,\cdot)=\Gamma^0\ , \quad x\in (0,L)\ ,
\eeqn
and no-slip boundary conditions at $x=0$ and $x=L$,
\beqn\label{4AA}
\partial_xh=\partial_x m=\partial_x\Gamma=0\ ,\quad x=0, L\ .
\eeqn
In the following we shall focus on this version of the system \eqref{1}-\eqref{4}.

\section{Local Well-Posedness and an Energy Functional}\label{sec3}

First we state an existence result (locally in time) for the system \eqref{1A}-\eqref{4AA}. For this we define 
$$
O_\alpha:=H_N^{2\alpha}\big((0,L),\R^3\big)\cap C\big([0,L],(0,\infty)^3\big)\ ,\quad \alpha\in  [0,1]\ ,
$$
where
$$
H_N^{2\alpha}:=H_N^{2\alpha}\big((0,L),\R^3\big):=\left\{
\begin{array}{lcl}
\big\{u\in H^{2\alpha}\big((0,L),\R^3\big)\,;\, \partial_xu=0\ \text{for}\ x=0,L\big\} & \mbox{ if } & \alpha>3/4\,,\\ 
& & \\
H^{2\alpha}\big((0,L),\R^3\big) & \mbox{ if } & \alpha\in [0,3/4]\,,
\end{array}
\right.
$$
with $H^{2\alpha}$ being the usual Sobolev spaces. Let $\alpha\in (3/4,1]$. Then $O_\alpha$ is open in $H_N^{2\alpha}$ and a subset of $C^1\big([0,L],(0,\infty)^3\big)$.
Given $u:=(h,m,\Gamma)$, we introduce the matrices
\begin{align*}
&a(u):=a(h,m,\Gamma):=\left(
\begin{matrix}
\displaystyle{\frac{G}{3}\,h^3}&0&\displaystyle{-\frac{h^2}{2}\,\sigma'(\Gamma)}\\ 
& & \\
\displaystyle{\frac{G}{3}\,h^2\, m-\delta\,\frac{m}{h}}& \delta&\displaystyle{-\frac{1}{2}\,h\,m\,\sigma'(\Gamma)}\\ 
& & \\
\displaystyle{\frac{G}{2}\, h^2\,\Gamma} & 0&\displaystyle{D-h\,\Gamma\,\sigma'(\Gamma)}
\end{matrix}\right)\ ,\\
& \\
&b(u):=b(h,m,\Gamma):=\left(
\begin{matrix}
0&0&0\\
& & \\
0&\displaystyle{\frac{K\,\beta}{h}}&-K\\ 
& & \\
0&\displaystyle{-\frac{K\,\beta}{h}}&K
\end{matrix}\right)\ ,
\end{align*}
and we define an operator $A$ as
\beqn\label{400}
A(u)w:=-\partial_x\big(a(u)\partial_x w\big)+b(u)w\ ,\qquad w\in H_N^{2}\ ,\quad u\in O_\alpha\ .
\eeqn
Writing $u^0:=(h^0,m^0,\Gamma^0)$ equations \eqref{1A}-\eqref{4AA} may be recast as a quasi-linear equation in the space \mbox{$L_2:=L_2\big((0,L),\R^3\big)$} of the form
\beqn\label{30}
\dot{u} +A(u)\,u=0\ ,\quad t>0\ ,\qquad u(0)=u^0\ .
\eeqn
If $\sigma$ is a non-increasing function, i.e. $\sigma'\le 0$, it readily follows that the matrix $a(\xi_1,\xi_2,\xi_3)$ for $\xi_j\in (0,\infty)$ has only positive eigenvalues. Thus, since $b(u)$ defines (for $u\in O_\alpha$ fixed) a bounded operator on $L_2$, we infer from \cite[Ex.~4.3.e), Thm.~4.1]{AmannTeubner} that
\beqn\label{40}
A\in C^{1-}\big(O_\alpha,\mathcal{H}(H_N^{2},L_2)\big)\ ,
\eeqn
that is, $A$ is a locally Lipschitz mapping from $O_\alpha$ into the set of negative generators of strongly continuous analytic semigroups on $L_2$ with domain $H_N^{2}$. Therefore, \cite[Thm.~12.1]{AmannTeubner} warrants the following local existence result concerning equation \eqref{30}:

\begin{thm}\label{T1}
Let $\sigma\in C^2(\R)$ be non-increasing, and let $\alpha\in (3/4,1]$. Then, given any initial condition \mbox{$u^0=(h^0,m^0,\Gamma^0)\in O_\alpha$}, the problem \eqref{30} (and hence \eqref{1A}-\eqref{4AA}) admits a unique (strictly) positive strong solution
$$
u=(h,m,\Gamma)\in C\big([0,T),O_\alpha\big)\cap C^1\big((0,T),L_2\big)\cap C\big((0,T),H_N^2\big)
$$
with maximal time of existence $T\in (0,\infty]$.
\end{thm}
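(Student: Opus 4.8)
The plan is to read \eqref{30} as an abstract quasi-linear parabolic Cauchy problem $\dot u+A(u)u=0$, $u(0)=u^0$, and to apply Amann's general existence theorem \cite[Thm.~12.1]{AmannTeubner}. Its hypotheses are two: (i) $O_\alpha$ is an open subset of an interpolation space of exponent $\alpha\in(3/4,1]$ between the base space $L_2$ and the domain space $H_N^2$; and (ii) the regularity property \eqref{40}, namely that $A$ maps $O_\alpha$ locally Lipschitz-continuously into the class $\mathcal{H}(H_N^2,L_2)$ of negative generators of strongly continuous analytic semigroups on $L_2$ with domain $H_N^2$. Once both are in place, the theorem produces the asserted maximal strong solution, and strict positivity is automatic since the solution takes values in $O_\alpha$.

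For the functional-analytic setup I would first record that for $\alpha\in(3/4,1]$ one has $2\alpha>3/2$, so $H^{2\alpha}((0,L),\R^3)\hookrightarrow C^1([0,L],\R^3)$; this is exactly the threshold at which the no-flux condition $\partial_xu=0$ at $x=0,L$ is meaningful as a pointwise trace, and it identifies $H_N^{2\alpha}$ with the interpolation space $(L_2,H_N^2)_\alpha$. Strict positivity of all three components is an open condition in $C([0,L],\R^3)$, whence $O_\alpha$ is open in $H_N^{2\alpha}$; moreover on $O_\alpha$ the functions $h^{\pm1}$, $m$, $\Gamma$ and $\sigma'(\Gamma)$ lie in $C^1([0,L])$ and are bounded with $h$ bounded away from $0$, so in particular $b(u)$ is a bounded operator on $L_2$, uniformly on subsets of $O_\alpha$ staying away from $\{h=0\}$.

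The central point is the generator property for fixed $u=(h,m,\Gamma)\in O_\alpha$. Here $e_2=(0,1,0)$ is, for each $x$, an eigenvector of $a(u)(x)$ with eigenvalue $\delta>0$, and expanding the characteristic polynomial along the second column shows that the rest of the spectrum is that of the $2\times2$ block
\beqn\label{eq:block}
\left(\begin{matrix}\dfrac{G}{3}\,h^3 & -\dfrac{h^2}{2}\,\sigma'(\Gamma)\\ & \\ \dfrac{G}{2}\,h^2\,\Gamma & D-h\,\Gamma\,\sigma'(\Gamma)\end{matrix}\right).
\eeqn
When $\sigma'\le0$ and $h,\Gamma>0$ this block has trace $\tfrac{G}{3}h^3+D-h\Gamma\sigma'(\Gamma)\ge\tfrac{G}{3}h^3+D>0$ and determinant $\tfrac{G}{3}h^3D-\tfrac{G}{12}h^4\Gamma\sigma'(\Gamma)\ge\tfrac{G}{3}h^3D>0$, so both its eigenvalues have strictly positive real part; hence all three eigenvalues of $a(u)(x)$ lie in the open right half-plane, uniformly in $x\in[0,L]$ by compactness and the uniform positivity of $h$ and $\Gamma$. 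This is precisely the normal ellipticity hypothesis of \cite[Ex.~4.3.e), Thm.~4.1]{AmannTeubner}, which gives that $-\partial_x(a(u)\partial_x\,\cdot\,)$, with the no-flux boundary conditions built into $H_N^2$, belongs to $\mathcal{H}(H_N^2,L_2)$; since adding the bounded operator $b(u)$ leaves this class invariant, $A(u)\in\mathcal{H}(H_N^2,L_2)$ for every $u\in O_\alpha$.

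It then remains to upgrade this to \eqref{40} by checking local Lipschitz continuity of $u\mapsto A(u)$. The entries of $a(u)$ and $b(u)$ are smooth functions of $h,\,h^{-1},\,m,\,\Gamma,\,\sigma'(\Gamma)$; using that $H^{2\alpha}((0,L))$ is a multiplication algebra for $2\alpha>1/2$, that $h\mapsto h^{-1}$ is locally Lipschitz on $\{h\ge c>0\}\subset H^{2\alpha}$, and that $\Gamma\mapsto\sigma'(\Gamma)$ is locally Lipschitz on bounded subsets of $H^{2\alpha}$ because $\sigma'\in C^1$ (here the hypothesis $\sigma\in C^2$ is used), one obtains that $u\mapsto(a(u),b(u))$ is locally Lipschitz from $O_\alpha$ into $C^1([0,L],\R^{3\times3})\times C([0,L],\R^{3\times3})$; multiplication by such coefficients being a bounded operation $H_N^2\to L_2$, the map $A\colon O_\alpha\to\mathcal{L}(H_N^2,L_2)$ is locally Lipschitz, with image in $\mathcal{H}(H_N^2,L_2)$ by the previous step. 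Then \cite[Thm.~12.1]{AmannTeubner} applies to \eqref{30} and yields the unique maximal strong solution $u\in C([0,T),O_\alpha)\cap C^1((0,T),L_2)\cap C((0,T),H_N^2)$. I expect the main obstacle to be the verification of normal ellipticity together with the identification of the phase space: one must show that the non-symmetric, $x$-dependent matrix $a(u)$ keeps its spectrum in the open right half-plane uniformly on $[0,L]$ — which is exactly where $\sigma'\le0$ enters decisively, through the determinant of \eqref{eq:block} — and one must confirm that $\alpha\in(3/4,1]$ is precisely the range for which $O_\alpha$ is open in $(L_2,H_N^2)_\alpha=H_N^{2\alpha}$, so that the abstract machinery of \cite{AmannTeubner} is indeed applicable.
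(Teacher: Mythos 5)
Your proposal is correct and follows essentially the same route as the paper: the paper's proof likewise verifies that $a(u)$ has spectrum in the open right half-plane when $\sigma'\le 0$ (normal ellipticity), treats $b(u)$ as a bounded perturbation, invokes \cite[Ex.~4.3.e), Thm.~4.1]{AmannTeubner} to obtain \eqref{40}, and then applies \cite[Thm.~12.1]{AmannTeubner}. Your explicit reduction of the spectrum of $a(u)$ to the eigenvalue $\delta$ and the $2\times2$ block with positive trace and determinant is a correct filling-in of the eigenvalue claim the paper states without computation.
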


Notice that, thanks to the positivity of $h$, $C_0:=\beta m/h$ is well-defined having the same regularity properties as $m$ and $(h,C_0,\Gamma)$ solves \eqref{1}-\eqref{4}. Notice further that the positivity of solutions is built into the set $O_\alpha$.\\

Next we shall derive an energy functional for the system \eqref{1A}-\eqref{4AA}.

\begin{prop}\label{P1.5}
Consider a non-increasing function $\sigma\in C^2(\R)$, $\alpha\in (3/4,1]$, and an initial condition \mbox{$u^0=(h^0,m^0,\Gamma^0)\in O_\alpha$}. The corresponding solution $u=(h,m,\Gamma)$ to \eqref{1A}-\eqref{4AA} given by Theorem~\ref{T1} satisfies
\bqnn
\begin{split}
\dfrac{\rd}{\rd t}\int_0^L & \left(\phi(\Gamma)+\frac{1}{\beta}\,h\, \phi\left(\frac{\beta\, m}{h}\right)+\frac{G}{2}\, h^2\right)\, \rd x\\
&=\ -D\int_0^L \phi''(\Gamma)\,\vert\partial_x\Gamma \vert^2\,\rd x -\frac{\delta}{\beta}\int_0^L \phi''\left(\frac{m}{h}\right)\,h\,\left\vert \partial_x\left(\frac{\beta\, m}{h}\right)\right\vert^2\,\rd x\\
&\quad\ -\frac{1}{4}\int_0^L h\,\vert\partial_x\sigma(\Gamma) \vert^2\,\rd x-\int_0^L\left(\frac{G}{\sqrt{3}}\, h^{3/2}\, \partial_x h-\frac{\sqrt{3}}{2}\, h^{1/2}\,\partial_x\sigma(\Gamma)\right)^2\,\rd x\\
&\quad\ -K\int_0^L\left(\phi'(\Gamma)-\phi'\left(\frac{\beta\, m}{h}\right)\right)\,\left( \Gamma-\frac{\beta\, m}{h}\right)\, \rd x\ ,
\end{split}
\eqnn
for $t\in [0,T)$, the function $\phi$ being such that 
\beqn\label{31}
\phi''(r)\,r=-\sigma'(r)\ge 0\ ,\quad r> 0\ .
\eeqn
\end{prop}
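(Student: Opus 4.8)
The plan is to differentiate the functional directly along the solution and use the PDEs \eqref{1A}--\eqref{3A} to rewrite each time-derivative term, then integrate by parts in $x$, using the no-flux boundary conditions \eqref{4AA} to kill all boundary contributions. Concretely, write $\mathcal{E}(t):=\int_0^L\big(\phi(\Gamma)+\tfrac1\beta\,h\,\phi(\beta m/h)+\tfrac{G}{2}h^2\big)\,\rd x$ and compute $\rd\mathcal{E}/\rd t$ as a sum of three pieces. For the $\phi(\Gamma)$-piece, multiply \eqref{3A} by $\phi'(\Gamma)$ and integrate; for the $\tfrac{G}{2}h^2$-piece, multiply \eqref{1A} by $G h$ and integrate; the middle piece is the delicate one (see below). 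Throughout, I would use the key algebraic identity $\phi''(r)r=-\sigma'(r)$ from \eqref{31}, together with the chain rule $\partial_x\sigma(\Gamma)=\sigma'(\Gamma)\partial_x\Gamma=-\phi''(\Gamma)\,\Gamma\,\partial_x\Gamma$, to convert the Marangoni terms (those involving $\partial_x\sigma(\Gamma)$) into expressions that combine with the $\phi''$-weighted gradient terms.

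For the $\phi(\Gamma)$-term: using \eqref{3A},
\[
\frac{\rd}{\rd t}\int_0^L\phi(\Gamma)\,\rd x
= \int_0^L \phi'(\Gamma)\,\partial_x\!\Big(\tfrac{G}{2}h^2\Gamma\partial_x h - h\Gamma\,\partial_x\sigma(\Gamma)+D\partial_x\Gamma\Big)\rd x
+ K\int_0^L \phi'(\Gamma)\Big(\tfrac{\beta m}{h}-\Gamma\Big)\rd x.
\]
Integrating by parts produces $-\int_0^L\phi''(\Gamma)\partial_x\Gamma\big(\tfrac{G}{2}h^2\Gamma\partial_xh - h\Gamma\,\partial_x\sigma(\Gamma)+D\partial_x\Gamma\big)\rd x$; the $D$-term is exactly $-D\int\phi''(\Gamma)|\partial_x\Gamma|^2$, and in the first two terms one uses $\phi''(\Gamma)\Gamma\,\partial_x\Gamma=-\sigma'(\Gamma)\partial_x\Gamma=-\partial_x\sigma(\Gamma)$ to get $+\tfrac{G}{2}\int h^2\,\partial_x h\,\partial_x\sigma(\Gamma) - \int h|\partial_x\sigma(\Gamma)|^2$. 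The height term is handled identically: $\tfrac{\rd}{\rd t}\int\tfrac{G}{2}h^2 = G\int h\,\partial_t h = -G\int\partial_x h\,\big(\tfrac{G}{3}h^3\partial_x h-\tfrac12 h^2\partial_x\sigma(\Gamma)\big) = -\tfrac{G^2}{3}\int h^3|\partial_x h|^2 + \tfrac{G}{2}\int h^2\,\partial_x h\,\partial_x\sigma(\Gamma)$.

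The main obstacle is the middle term $\tfrac1\beta\int_0^L h\,\phi(\beta m/h)\,\rd x$, since both $h$ and $m$ evolve and the argument $\beta m/h$ is a quotient. I would set $C_0:=\beta m/h$ (legitimate by positivity, cf.\ the remark after Theorem~\ref{T1}), note $hC_0 = \beta m$, and use $\partial_t(h\phi(C_0)/\beta) = \tfrac1\beta\phi(C_0)\partial_t h + \tfrac1\beta h\phi'(C_0)\partial_t C_0$. It is cleaner to go back to the $(h,C_0,\Gamma)$-form \eqref{1}--\eqref{3}: from \eqref{2}, $\partial_t(hC_0) = \partial_x\big(\tfrac{G}{3}h^3C_0\partial_x h - \tfrac12 h^2 C_0\partial_x\sigma(\Gamma)+\delta h\partial_x C_0\big) - \beta K(C_0-\Gamma)$, while $\partial_t h$ is given by \eqref{1}; combining, $h\,\partial_t C_0 = \partial_t(hC_0) - C_0\partial_t h$ yields an expression for $\partial_t C_0$ whose advective part has the form $\tfrac{G}{3}h^3\partial_x h\,\partial_x C_0 - \tfrac12 h^2\partial_x\sigma(\Gamma)\partial_x C_0 + \partial_x(\delta h\partial_x C_0) - C_0(\text{stuff})$. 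Then $\tfrac1\beta\int h\phi'(C_0)\partial_t C_0\,\rd x$ is treated by an integration by parts: the $\delta$-term gives $-\tfrac{\delta}{\beta}\int h\,\phi''(C_0)|\partial_x C_0|^2$ (after absorbing a $\partial_x h$ cross term, which cancels against the corresponding term from $\tfrac1\beta\phi(C_0)\partial_t h$ — this bookkeeping is the fiddly part), and the $\partial_x\sigma(\Gamma)$-piece together with $\phi''(C_0)C_0 = -\sigma'(C_0)$ contributes $+\tfrac12\int h\,\partial_x\sigma(\Gamma)\,\partial_x\sigma(C_0)$. Finally I would collect everything: the three $\partial_x\sigma(\Gamma)$ and $\partial_x h$ quadratic terms assemble via the identity $-\tfrac{G^2}{3}h^3|\partial_x h|^2 + G h^2\partial_x h\,\partial_x\sigma(\Gamma) - h|\partial_x\sigma(\Gamma)|^2 = -\big(\tfrac{G}{\sqrt3}h^{3/2}\partial_x h - \tfrac{\sqrt3}{2}h^{1/2}\partial_x\sigma(\Gamma)\big)^2 - \tfrac14 h|\partial_x\sigma(\Gamma)|^2$ (a completion of squares — note $\tfrac14+\tfrac34 = 1$), and the two reaction terms (coefficient $K$) combine, after adding the $\phi'(\Gamma)$ and $\phi'(C_0)$ contributions with their signs, into $-K\int(\phi'(\Gamma)-\phi'(C_0))(\Gamma-C_0)$. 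Matching $\phi''(m/h)$ versus $\phi''(\beta m/h)$ in the stated formula is just notation since $C_0=\beta m/h$; I would simply display the result in the $C_0$ variable and then rewrite. The boundary terms in every integration by parts vanish because $\partial_x h=\partial_x m=\partial_x\Gamma=0$ at $x=0,L$, hence also $\partial_x C_0=\partial_x\sigma(\Gamma)=0$ there.
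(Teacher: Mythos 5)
Your overall strategy is the paper's: differentiate the functional, substitute the evolution equations, integrate by parts using \eqref{4AA}, exploit $\phi''(r)\,r=-\sigma'(r)$ to convert the Marangoni terms, and complete the square at the end. Your treatment of the $\phi(\Gamma)$-piece, of the $\tfrac{G}{2}h^2$-piece, and the final algebraic identity
$-\tfrac{G^2}{3}h^3|\partial_xh|^2+G\,h^2\partial_xh\,\partial_x\sigma(\Gamma)-h|\partial_x\sigma(\Gamma)|^2
=-\big(\tfrac{G}{\sqrt3}h^{3/2}\partial_xh-\tfrac{\sqrt3}{2}h^{1/2}\partial_x\sigma(\Gamma)\big)^2-\tfrac14 h|\partial_x\sigma(\Gamma)|^2$
coincide with the paper, which works in the $(h,m,\Gamma)$ variables throughout (using $h\,\partial_x(\beta m/h)=\beta\,\partial_xm-\tfrac{\beta m}{h}\partial_xh$) rather than passing to $C_0=\beta m/h$; that difference is purely notational.

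There is, however, a concrete error in your account of the middle term. Set $J:=\tfrac{G}{3}h^3\partial_xh-\tfrac12 h^2\partial_x\sigma(\Gamma)$. Since the advective flux in \eqref{2} is exactly $C_0 J$, subtracting $C_0\,\partial_th=C_0\,\partial_xJ$ from $\partial_t(hC_0)$ gives
\begin{equation*}
h\,\partial_tC_0=J\,\partial_xC_0+\delta\,\partial_x\big(h\,\partial_xC_0\big)-\beta K\,(C_0-\Gamma)\ ,
\end{equation*}
so the advective contribution to $\tfrac1\beta\int_0^Lh\,\phi'(C_0)\,\partial_tC_0\,\rd x$ is $\tfrac1\beta\int_0^LJ\,\partial_x\phi(C_0)\,\rd x$ — note it involves $\phi'(C_0)\partial_xC_0=\partial_x\phi(C_0)$ and \emph{not} $\phi''(C_0)C_0\partial_xC_0=-\partial_x\sigma(C_0)$ — and this cancels \emph{in its entirety} against $\tfrac1\beta\int_0^L\phi(C_0)\,\partial_th\,\rd x=-\tfrac1\beta\int_0^LJ\,\partial_x\phi(C_0)\,\rd x$. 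The term $+\tfrac12\int_0^Lh\,\partial_x\sigma(\Gamma)\,\partial_x\sigma(C_0)\,\rd x$ that you claim survives does not exist; if it did, it would persist into the final identity and, being sign-indefinite, would destroy the non-positivity of the right-hand side that the Proposition is designed to deliver. The middle term contributes only $-\tfrac{\delta}{\beta}\int_0^Lh\,\phi''(C_0)\,|\partial_xC_0|^2\,\rd x$ and the reaction term $-K\int_0^L\phi'(C_0)(C_0-\Gamma)\,\rd x$, which is exactly what is needed. Finally, ``$\phi''(m/h)$ versus $\phi''(\beta m/h)$'' is not merely notation: the displayed formula should read $\phi''(\beta m/h)$ (a typo in the statement, consistent with the proof), and your change of variables does not resolve that discrepancy.
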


Observe that each term on the right-hand side is non-positive, in particular due to the monotonicity \eqref{31} of $\phi'$.

\begin{proof}
We deduce from \eqref{1A}-\eqref{4AA} that, for $t\in [0,T)$,
\bqnn
\begin{split}
\dfrac{\rd}{\rd t}\int_0^L & \left(\phi(\Gamma)+\frac{1}{\beta}\,h\, \phi\left(\frac{\beta\, m}{h}\right)\right)\, \rd x\\
&=\int_0^L\left\{\phi'(\Gamma)\,\partial_t\Gamma+\phi'\left(\frac{\beta\, m}{h}\right)\,\partial_t m+\frac{1}{\beta}\,\left(\phi\left(\frac{\beta\, m}{h}\right)-\frac{\beta\, m}{h}\phi'\left(\frac{\beta\, m}{h}\right)\right)\,\partial_th\right\}\,\rd x\\
&=\int_0^L\phi''(\Gamma)\,\partial_x\Gamma\,\left(-\frac{G}{2}\, h^2\,\Gamma\,\partial_x h+h\,\Gamma\,\partial_x\sigma(\Gamma)-D\,\partial_x\Gamma\right)\,\rd x\\
&\quad +K\int_0^L\phi'(\Gamma)\,\left( \frac{\beta\,m}{h}-\Gamma\right)\, \rd x\\
&\quad +\int_0^L\phi''\left(\frac{\beta\, m}{h}\right)\,\partial_x\left(\frac{\beta\, m}{h}\right)\, 
\left\{ \left(-\frac{G}{3}\,h^2\,m+\delta\,\frac{m}{h}\right)\, \partial_x h-\delta\,\partial_x m+\frac {1}{2}\,m\,h\,\partial_x\sigma(\Gamma)\right\}\,\rd x\\
&\quad - K\int_0^L\phi'\left(\frac{\beta\, m}{h}\right)\,\left( \frac{\beta\, m}{h}-\Gamma\right)\, \rd x\\
&\quad-\frac{1}{\beta}\int_0^L \phi''\left(\frac{\beta\, m}{h}\right)\,\frac{\beta\, m}{h}\,\partial_x \left(\frac{\beta\, m}{h}\right)\,\left\{-\frac{G}{3}\,h^3\,\partial_x h+\frac{1}{2}\,h^2\,\partial_x\sigma(\Gamma)\right\}\,\rd x\ .
\end{split}
\eqnn
Recall that \eqref{31} ensures 
$$
\phi''(\Gamma)\Gamma\partial_x\Gamma=-\partial_x\sigma(\Gamma) \;\;\mbox{ and }\;\; h\partial_x\left( \frac{\beta m}{h} \right)=\beta\partial_x m-\frac{\beta m}{h}\partial_x h\ .
$$ 
Hence, noticing that the last integral of the right-hand side of the above equality cancels with the first and the last term of the third integral, we have
\bqnn
\begin{split}
\dfrac{\rd}{\rd t}\int_0^L & \left(\phi(\Gamma)+\frac{1}{\beta}\,h\, \phi\left(\frac{\beta\, m}{h}\right)\right)\, \rd x\\
&=-\int_0^L h\,\vert\partial_x\sigma(\Gamma) \vert^2\,\rd x-D\int_0^L \phi''(\Gamma)\,\vert\partial_x\Gamma \vert^2\,\rd x  -\frac{\delta}{\beta}\int_0^L \phi''\left(\frac{\beta\, m}{h}\right)\,h\,\left\vert \partial_x\left(\frac{\beta\, m}{h}\right)\right\vert^2\,\rd x\\
&\quad -K\int_0^L\left(\phi'(\Gamma)-\phi'\left(\frac{\beta\, m}{h}\right)\right)\,\left( \Gamma-\frac{\beta\, m}{h}\right)\, \rd x +\frac{G}{2}\int_0^L h^2\,\partial_x h\,\partial_x\sigma(\Gamma)\, \rd x\ .
\end{split}
\eqnn
From equation \eqref{1A} it follows that
\bqnn
\dfrac{\rd}{\rd t} \frac{G}{2}\int_0^L h^2\,\rd x=-\frac{G^2}{3}\int_0^L h^3\,\vert\partial_x h\vert^2\,\rd x +\frac{G}{2}\int_0^L h^2\, \partial_x h\,\partial_x\sigma(\Gamma)\,\rd x\ ,
\eqnn
which, added to the previous equality and after completing the square, yields the stated energy equality.   
\end{proof}

\medskip

An interesting consequence of Proposition~\ref{P1.5} is that it provides a complete description of the (smooth positive) stationary solutions to \eqref{1A}-\eqref{4AA} when $\sigma$ is decreasing. Indeed, observe that, if $u$ is a (smooth positive) stationary solution to \eqref{1A}-\eqref{4AA}, the left-hand side of the energy equality vanishes and so does each term of the right-hand side since they are all nonnegative. It is then straightforward to establish the following result:

\begin{cor}\label{C2}
Suppose that $\sigma\in C^2(\R)$ is strictly decreasing. Then the only (smooth positive) steady states to \eqref{1A}-\eqref{4AA} are of the form $(h_*, m_*,\Gamma_*)$ with constants $h_*>0$ and $m_*=h_*\Gamma_*/\beta >0$.
\end{cor}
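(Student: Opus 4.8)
The plan is to exploit the energy equality from Proposition~\ref{P1.5}. Since $u=(h,m,\Gamma)$ is a smooth positive stationary solution, the left-hand side of the energy equality vanishes identically, so each of the five nonnegative terms on the right-hand side must vanish. First I would read off from the fourth term, $\int_0^L h\,\vert\partial_x\sigma(\Gamma)\vert^2\,\rd x=0$, together with the strict positivity of $h$, that $\partial_x\sigma(\Gamma)\equiv 0$ on $(0,L)$; and from the first term together with $\phi''(\Gamma)=-\sigma'(\Gamma)/\Gamma\ge 0$ and the strict decrease of $\sigma$ (which forces $\sigma'<0$, hence $\phi''(\Gamma)>0$) that $\partial_x\Gamma\equiv 0$. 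Thus $\Gamma$ is a positive constant, call it $\Gamma_*$.

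Next I would extract from the fifth (coupling) term that $\bigl(\phi'(\Gamma)-\phi'(\beta m/h)\bigr)\bigl(\Gamma-\beta m/h\bigr)\equiv 0$; since $\phi'$ is strictly increasing (again because $\phi''>0$ on $(0,\infty)$), the product vanishes only where $\beta m/h=\Gamma=\Gamma_*$, so $\beta m/h\equiv\Gamma_*$ on $(0,L)$, i.e.\ $m=h\Gamma_*/\beta$. Then from the second term, $\frac{\delta}{\beta}\int_0^L \phi''(\beta m/h)\,h\,\vert\partial_x(\beta m/h)\vert^2\,\rd x=0$ is automatically consistent (the integrand already vanishes), so I turn to the remaining square term: $\int_0^L\bigl(\frac{G}{\sqrt 3}h^{3/2}\partial_x h-\frac{\sqrt 3}{2}h^{1/2}\partial_x\sigma(\Gamma)\bigr)^2\,\rd x=0$. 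Since we already know $\partial_x\sigma(\Gamma)\equiv 0$, this collapses to $\frac{G}{\sqrt 3}h^{3/2}\partial_x h\equiv 0$, and with $G>0$ and $h>0$ we conclude $\partial_x h\equiv 0$, so $h\equiv h_*$ is a positive constant. Consequently $m\equiv h_*\Gamma_*/\beta$ is a positive constant as well.

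Finally I would note that any such triple $(h_*,m_*,\Gamma_*)$ with $h_*>0$, $\Gamma_*>0$ and $m_*=h_*\Gamma_*/\beta$ is indeed a stationary solution: all spatial derivatives vanish, so the flux terms in \eqref{1A}--\eqref{3A} are identically zero, and the reaction term $K(\beta m_*/h_*-\Gamma_*)=K(\Gamma_*-\Gamma_*)=0$ vanishes, while the boundary conditions \eqref{4AA} hold trivially. This shows the family in the statement is exactly the set of smooth positive steady states.

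The argument is essentially a sequence of routine sign/positivity deductions once the energy equality is in hand, so there is no serious obstacle; the only point requiring a little care is the logical order—one must use $\partial_x\sigma(\Gamma)\equiv 0$ \emph{before} concluding $\partial_x h\equiv 0$ from the square term, since a priori that square could vanish through cancellation of its two summands rather than each vanishing separately. Establishing $\Gamma_*$ constant first (via the $\phi''(\Gamma)\vert\partial_x\Gamma\vert^2$ term) removes this ambiguity cleanly.
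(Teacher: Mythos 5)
Your proof is correct and follows exactly the route the paper indicates (the paper only sketches it as ``straightforward''): set the energy identity's left-hand side to zero, conclude each nonpositive term on the right vanishes, and deduce first $\partial_x\sigma(\Gamma)\equiv 0$ (hence $\Gamma\equiv\Gamma_*$ by injectivity of the strictly decreasing $\sigma$), then $\beta m/h\equiv\Gamma_*$ from the strict monotonicity of $\phi'$, and finally $\partial_x h\equiv 0$ from the completed square, together with the easy converse. Your remark on the logical ordering (needing $\partial_x\sigma(\Gamma)\equiv 0$ before exploiting the square term) is exactly the one point of care in the argument.
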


The next section is dedicated to the local asymptotic stability of the steady states $(h_*, m_*,\Gamma_*)$ for small values of the surfactant concentrations, i.e. for small values of $m_*$ and $\Gamma_*$. Let us mention at this point that some constraints have to be satisfied by the initial data because of the conservation of the mass of the fluid and the mass of the surfactant \eqref{24}.

\section{Asymptotic Stability}\label{sec4}

To prove stability of steady states we use the same notation as in the previous section and write equations \eqref{1A}-\eqref{4AA} in the form \eqref{30} with $u=(h,m,\Gamma)$ and $u^0=(h^0,m^0,\Gamma^0)$. 

Let $h_*>0$ be a fixed constant and, given a sufficiently small number $\eta_*>0$, set
$$
m_*:=\frac{h_*}{\beta+h_*}\, \eta_*\ ,\qquad \Gamma_*:=\frac{\beta}{\beta+h_*}\, \eta_*\ \ .
$$
Notice that $\beta m_*=h_*\Gamma_*$ and thus, $u_*:=(h_*,m_*,\Gamma_*)$ is an equilibrium of \eqref{30}.
Motivated by \eqref{24} we then introduce a projection $P\in\mathcal{L}(L_2)\cap\mathcal{L}(H_N^2)$, i.e. $P^2=P$, by setting
$$
Pu:=\left(\begin{matrix} 
\displaystyle{h-\langle h\rangle} \\ 
 \\
\displaystyle{m-\frac{h_*}{h_*+\beta}\,\langle m+\Gamma\rangle}\\ 
 \\
\displaystyle{\Gamma-\frac{\beta}{h_*+\beta}\,\langle m+\Gamma\rangle}
\end{matrix}\right)\ ,\quad u=(h,m,\Gamma)\in L_2\, ,
$$
where
$$
\langle f\rangle:=\frac{1}{L}\int_0^L f(x)\,\rd x
$$
denotes the mean value of a given function $f\in L_2((0,L))$. Then both spaces $L_2$ and $H_N^2$ decompose into
$$
L_2=PL_2\oplus (1-P)L_2\ ,\qquad H_N^2=PH_N^2\oplus (1-P)H_N^2\ .
$$
Also note that
\beqn\label{51}
(1-P)\, A(v)\, w=0\ ,\quad (1-P)\,b(v)\, w=0\ , \qquad w\in H_N^2\ ,\quad v\in O_1\ ,
\eeqn
that is, $A(v)w\in PL_2$ and $b(v)w\in PL_2$ for any $v\in O_1$ and $w\in H_N^2$. Thus, if $u$ is any strong solution to \eqref{30} on $[0,\infty)$ with initial condition $u^0=(h^0,m^0,\Gamma^0)\in H_N^2$ satisfying $\langle h^0\rangle=h_*$ and $\langle m^0+\Gamma^0\rangle=\eta_*$, then necessarily
$$
(1-P)u(t)=(1-P)u^0=u_*\ ,\quad t\ge 0\ .
$$ 
Therefore, we have
$$
u(t)=v(t) + u_*\,\in PH_N^2\oplus (1-P)H_N^2\ ,\quad t>0\ ,
$$ 
and $v:=Pu$ solves
\begin{align}
\dot{v}+A_*v&=\big(A_*-A(u_*+v)\vert_{PH_N^2}\big)\, v-b(u_*+v)u_*=:F(v)\ ,\quad t>0\ ,\label{52a}\\
v(0)&=Pu^0=u^0-u_*=:v^0\ ,\label{52b}
\end{align}
with
$$
A_* w:=A(u_*)\vert_{PH_N^2}\, w +B_*\vert_{PH_N^2}\, w\ ,\quad w\in PH_N^2\ .
$$
Here, the operator $B_*$, given by 
\beqn\label{53}
B_*\, w=\left(\begin{matrix} 0\\ 
 \\
\displaystyle{-\frac{K\,\beta\, m_*}{h_*^2} h}\\
 \\
\displaystyle{\frac{K\,\beta\, m_*}{h_*^2} h}
\end{matrix}\right)\ ,\quad w=(h,m,\Gamma)\in H^1\ ,
\eeqn
is the Fr\'{e}chet derivative of $[z\mapsto b(z)u_*]: O_{1/2}\rightarrow L_2$ at $z=u_*$.

\begin{lem}\label{L1}
The operator $A_*$, considered as an unbounded operator in $PL_2$ with domain $PH_N^2$, belongs to $\mathcal{H}(PH_N^2,PL_2)$, that is, $-A_*$ is the generator of a strongly continuous analytic semigroup on $PL_2$.
\end{lem}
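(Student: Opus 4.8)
The plan is to deduce $A_* \in \mathcal{H}(PH_N^2, PL_2)$ from the already-established fact \eqref{40} that $A(u_*) \in \mathcal{H}(H_N^2, L_2)$, using that $PL_2$ is a complemented subspace of $L_2$ invariant under $A(u_*)$ (by \eqref{51}) and that the passage from $A(u_*)$ to $A_*$ amounts to restriction plus a bounded perturbation $B_*$. First I would record that, since $\alpha \in (3/4,1]$ and $u_* \in O_1 \subset O_\alpha$ is a constant (hence smooth) state with positive components, the matrix $a(u_*)$ is a constant matrix with positive eigenvalues (using $\sigma' \le 0$, exactly as in the paragraph preceding \eqref{40}), so $A(u_*) w = -a(u_*)\partial_x^2 w + b(u_*) w$ generates a strongly continuous analytic semigroup on $L_2$ with domain $H_N^2$. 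Then I would note the key algebraic facts: $P \in \mathcal{L}(L_2) \cap \mathcal{L}(H_N^2)$ is a projection commuting with the realization of $A(u_*)$ in the sense that $A(u_*)$ maps $H_N^2$ into $PL_2$ (by \eqref{51}, $(1-P)A(u_*)w = 0$), and $P$ maps $H_N^2$ onto $PH_N^2$; consequently $PA(u_*)|_{PH_N^2} = A(u_*)|_{PH_N^2}$ is a well-defined unbounded operator on $PL_2$ with domain $PH_N^2$.

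Next I would invoke the standard fact that if $-\Ac$ generates a strongly continuous analytic semigroup $\{e^{-t\Ac}\}_{t\ge 0}$ on a Banach space $E$, and $P \in \mathcal{L}(E)$ is a bounded projection commuting with the semigroup (equivalently, leaving the resolvent invariant), then the part of $\Ac$ in $PE$ generates the restricted analytic semigroup $\{e^{-t\Ac}|_{PE}\}_{t \ge 0}$ on $PE$. In our situation one checks the commutation at the level of the resolvent: for $\lambda$ in a sector, $(\lambda + A(u_*))^{-1}$ commutes with $P$ because $A(u_*)$ leaves $PH_N^2$ and $(1-P)H_N^2$ invariant — indeed $(1-P)H_N^2$ consists of constant vectors in the kernel of $a(u_*)\partial_x^2$ on which $b(u_*)$ also acts, but more simply \eqref{51} gives $(1-P)A(u_*)w = 0$, and since $(1-P)u^0 = u_*$-type vectors are constants, $A(u_*)$ restricted to $(1-P)H_N^2$ reduces to the bounded matrix action $b(u_*)|_{(1-P)H_N^2}$; either way $P$ commutes with $(\lambda + A(u_*))^{-1}$ and hence $A(u_*)|_{PH_N^2} \in \mathcal{H}(PH_N^2, PL_2)$.

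Finally, $A_* = A(u_*)|_{PH_N^2} + B_*|_{PH_N^2}$, and $B_*$ as given by \eqref{53} is a bounded linear operator (it involves only the first component $h$ of $w$, multiplied by a constant, with values in $PL_2$ by \eqref{51}-type reasoning, in fact it maps into the last two components with zero mean after applying $P$, but in any case $B_* \in \mathcal{L}(H^1, L_2) \subset \mathcal{L}(H_N^2, PL_2)$ and is even $\mathcal{L}(L_2)$-bounded on the relevant scale, certainly $A(u_*)$-bounded with relative bound $0$). Since the class $\mathcal{H}(PH_N^2, PL_2)$ is stable under addition of operators that are bounded from $PH_N^2$ to $PL_2$ — or, more sharply, under relatively bounded perturbations of relative bound less than one, a standard perturbation theorem for generators of analytic semigroups (see \cite[Thm.~I.1.3.1]{AmannTeubner}-type results already used in deriving \eqref{40}) — we conclude $A_* \in \mathcal{H}(PH_N^2, PL_2)$.

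The main obstacle, and the only point requiring care rather than routine citation, is the verification that $P$ genuinely commutes with $A(u_*)$ at the operator level — that is, that $A(u_*)$ leaves the decomposition $H_N^2 = PH_N^2 \oplus (1-P)H_N^2$ invariant so that taking the part in $PL_2$ is legitimate. This rests entirely on the algebraic identities \eqref{51}, which say $(1-P)A(v)w = 0$ for all $w \in H_N^2$; one must double-check that \eqref{51} indeed implies $A(u_*)(1-P)w \in PL_2$ as well (not just $(1-P)A(u_*)w = 0$), which follows because $(1-P)w$ is a constant $\R^3$-vector, $\partial_x^2$ annihilates it, and $b(u_*)$ applied to a constant vector of the form $(1-P)w$ — whose second and third components satisfy the linear relation built into $\mathrm{rg}(1-P)$ — lands in $PL_2$ by \eqref{51} with $w$ replaced by $(1-P)w$. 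Once this invariance is secured, everything else is a direct appeal to the perturbation theory already employed in the paper.
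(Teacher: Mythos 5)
Your argument is correct and essentially identical to the paper's: the paper likewise reads off from \eqref{51} and $A(u_*)\vert_{(1-P)H_N^2}=0$ that $A(u_*)$ acts diagonally with respect to the decomposition $PL_2\oplus(1-P)L_2$, invokes Amann's result on matrix operators to conclude $A(u_*)\vert_{PH_N^2}\in\mathcal{H}(PH_N^2,PL_2)$, and then adds $B_*$ as a bounded zeroth-order perturbation. One cosmetic caveat: the class $\mathcal{H}(PH_N^2,PL_2)$ is \emph{not} stable under arbitrary perturbations that are merely bounded from $PH_N^2$ to $PL_2$, but your fallback --- $B_*$ is bounded on $PL_2$ itself, hence has relative bound zero --- is the correct justification and is exactly the one the paper uses.
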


\begin{proof} Noticing that $A(u_*)\vert_{(1-P)H_N^2}=0$ and recalling \eqref{40} and \eqref{51}, we may interpret $A(u_*)$ as a matrix operator
$$
\left(\begin{matrix} A(u_*)\vert_{PH_N^2} & 0\\ 0&0\end{matrix}\right)\in\mathcal{H}\big(PH_N^2\oplus (1-P)H_N^2,PL_2\oplus (1-P)L_2\big)\ .
$$
We then conclude from \cite[I.Cor.~1.6.3]{LQPP} that $A(u_*)\vert_{PH_N^2}\in \mathcal{H}(PH_N^2,PL_2)$. Now $B_*\vert_{PH_N^2}\in\mathcal{L}(H_N^2,PH_N^2)$ can be considered as a perturbation, see \cite[I.Thm.~1.3.1]{LQPP}.
\end{proof}

Next we show that $-A_*$ has negative type for small values of $\eta_*$. 

\begin{lem}\label{L2}
There are numbers $\eps:=\eps(h_*)>0$ and \mbox{$\omega_0:=\omega_0(h_*)>0$} such that the spectrum $\sigma(-A_*)$ of $-A_*$ is contained in the half-plane $[\mathrm{Re}\,\lambda\le -\omega_0]$ provided that $0<\eta_*<\eps$.
\end{lem}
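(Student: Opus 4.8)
The plan is to establish the spectral bound by a perturbation argument around the constant-coefficient operator obtained at $\eta_*=0$. When $\eta_*=0$ one has $m_*=\Gamma_*=0$, so $B_*=0$ and the lower-order matrix $b(u_*)=0$, and the diffusion matrix $a(u_*)$ reduces to the constant upper-triangular matrix with diagonal entries $\tfrac{G}{3}h_*^3$, $\delta$, and $D$ (the off-diagonal entries $-\tfrac{h_*^2}{2}\sigma'(\Gamma_*)$, $-\tfrac12 h_*m_*\sigma'(\Gamma_*)$, $\tfrac{G}{2}h_*^2\Gamma_*$ all vanish since $m_*=\Gamma_*=0$, except $-\tfrac{h_*^2}{2}\sigma'(0)$ which survives but sits strictly above the diagonal and hence does not affect the spectrum). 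Call this limiting operator $A_*^0 = -a(u_*)|_{\eta_*=0}\,\partial_x^2$ on $PH_N^2$. Its spectrum is computed explicitly: on the complement of the constants spanned by the eigenfunctions $\cos(k\pi x/L)$, $k\ge 1$, the eigenvalues are $-\mu$ with $\mu\in\{\tfrac{G}{3}h_*^3,\delta,D\}\cdot (k\pi/L)^2$, all bounded above by $-\omega_0^{(0)}:=-\min\{\tfrac{G}{3}h_*^3,\delta,D\}(\pi/L)^2<0$ (the triangular structure means the matrix's spectrum is just its diagonal). Since $PL_2$ excludes the kernel of $\partial_x^2$ on which $a$ would give eigenvalue $0$, the whole spectrum of $-A_*^0$ lies in $[\mathrm{Re}\,\lambda\le -\omega_0^{(0)}]$.

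Next I would quantify the dependence of $A_*$ on $\eta_*$. Writing $A_* = A_*^0 + R(\eta_*)$, the remainder $R(\eta_*) = -(a(u_*)-a(u_*)|_{\eta_*=0})\partial_x^2 + b(u_*)|_{PH_N^2} + B_*|_{PH_N^2}$ collects (i) the $\eta_*$-dependent entries of the diffusion matrix, which are smooth in $\eta_*$ and vanish at $\eta_*=0$, times $\partial_x^2$; (ii) the bounded lower-order term $b(u_*)$, whose entries $K\beta/h_*$ and $-K\beta/h_*$ are in fact $\eta_*$-independent — so this piece does \emph{not} go to zero; and (iii) the bounded operator $B_*$, whose norm is $O(\eta_*)$ since $m_* = O(\eta_*)$. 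The subtle point is term (ii): $b(u_*)$ is not small. However, it only couples the $m$- and $\Gamma$-components, it annihilates the $h$-component, and on the relevant subspace it is a rank-structured perturbation that (together with the $\tfrac{1}{h_*+\beta}$ weighting built into $P$) respects the conservation $\langle m+\Gamma\rangle$. I would argue that $b(u_*)$, being a bounded operator, is a perturbation of $A_*^0$ in the sense of \cite[I.Thm.~1.3.1]{LQPP} that can move the spectrum by at most a fixed finite amount, and more precisely that $A(u_*)|_{\eta_*=0}$ already incorporates no such term, so one must analyze $A_*^0 + b(u_*)|_{PH_N^2}$ directly: this is still a constant-coefficient operator and can be diagonalized in the Fourier basis, reducing to a family of $3\times 3$ matrices $M_k$ indexed by $k\ge 1$, and one checks that each $M_k$ has spectrum with real part $\le -\omega_0$ uniformly in $k$, using that the diffusive part dominates for large $k$ and that for the finitely many small $k$ the matrix is a fixed matrix whose eigenvalues can be verified to have negative real part.

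Then I would close the argument: having fixed $\omega_0 := \tfrac12 \min\{\tfrac{G}{3}h_*^3,\delta,D\}(\pi/L)^2$ (or whatever uniform bound the $M_k$ analysis yields), and knowing $\sigma(-(A_*^0+b(u_*)|_{PH_N^2})) \subset [\mathrm{Re}\,\lambda \le -2\omega_0]$, I invoke the standard perturbation stability of the spectral bound for generators of analytic semigroups: since the remaining perturbation $-(a(u_*)-a(u_*)|_{\eta_*=0})\partial_x^2 + B_*|_{PH_N^2}$ has operator norm (relative to $A_*^0$, in the appropriate sense of \cite[I.Thm.~1.3.1]{LQPP}) tending to $0$ as $\eta_*\to 0$, there is $\eps=\eps(h_*)>0$ such that for $0<\eta_*<\eps$ the spectrum of $-A_*$ is contained in $[\mathrm{Re}\,\lambda\le -\omega_0]$. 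The main obstacle I anticipate is correctly handling the non-small, $\eta_*$-independent term $b(u_*)$ together with the weighted projection $P$: one must be careful that it is genuinely the constant-coefficient operator $A_*^0+b(u_*)|_{PH_N^2}$ whose negativity must be established by hand via the $3\times 3$ Fourier-mode matrices, rather than naively treating $b(u_*)$ as a small perturbation of a self-adjoint operator. The remaining pieces — smoothness in $\eta_*$ and smallness of $B_*$ — are routine.
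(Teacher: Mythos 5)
Your route is genuinely different from the paper's. The paper does not touch the spectrum directly: it runs an energy argument on the linearized system, multiplying the three equations by $qh$, $\beta m$, $h_*\Gamma$, collecting the gradient terms into a quadratic form governed by a symmetric matrix $b_q(h_*,m_*,\Gamma_*)$, checking that $b_q(h_*,0,0)$ is positive definite for $0<q<16GD/(3\sigma'(0)^2)$, invoking continuity in $\eta_*$, and then using Poincar\'e's inequality on the constrained space $\langle h\rangle=\langle m+\Gamma\rangle=0$ together with the dissipative term $\tfrac{K}{h_*}\|\beta m-h_*\Gamma\|_2^2$ to get $\|e^{-tA_*}\|\le M e^{-\omega_0 t}$, whence the spectral bound. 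Your Fourier-mode analysis of the constant-coefficient operator is a legitimate alternative: the computation for $M_k=\lambda_k a_0+b_0$ does work (the first column isolates the eigenvalue $\lambda_k\tfrac{G}{3}h_*^3$, and the remaining $2\times 2$ block has positive trace and determinant $\lambda_k^2\delta D+\lambda_k K(\delta+D\beta/h_*)>0$, uniformly in $k\ge 1$), and it buys a more explicit picture of the unperturbed spectrum; the paper's energy method buys a decay estimate that handles all the couplings, including the non-small sorption term, in one stroke without any mode-by-mode verification.

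There are two points you must repair. First, your claim that ``$PL_2$ excludes the kernel of $\partial_x^2$'' is false: the range of $P$ is characterized by $\langle h\rangle=0$ and $\langle m+\Gamma\rangle=0$, so it contains the nonzero constant vectors $(0,c,-c)$, on which $-a_0\partial_x^2$ vanishes. Hence $0\in\sigma(-A_*^0)$ and your asserted bound $\sigma(-A_*^0)\subset[\mathrm{Re}\,\lambda\le-\omega_0^{(0)}]$ is wrong as stated; your mode family ``$M_k$, $k\ge 1$'' omits exactly this mode. The gap is fixable — on $\mathrm{span}\{(0,1,-1)\}$ the operator reduces to $b_0$, and $b_0(0,1,-1)^T=K(\beta+h_*)h_*^{-1}(0,1,-1)^T$ gives a strictly positive eigenvalue — but it must be included, since it is precisely the mode where the diffusive part gives you nothing and only the sorption term saves you. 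Second, the closing perturbation step needs more than a reference to \cite[I.Thm.~1.3.1]{LQPP}, which concerns membership in $\mathcal{H}(PH_N^2,PL_2)$, not the location of the spectrum. What you actually need is a uniform resolvent bound $\sup\{\|(\lambda+A_*^0+b_0)^{-1}\|_{\mathcal{L}(PL_2,PH_N^2)}:\mathrm{Re}\,\lambda\ge-\omega_0\}<\infty$ (available because the unperturbed operator generates an analytic semigroup with spectral bound $-2\omega_0$ and its graph norm is equivalent to the $H^2$ norm), followed by a Neumann-series argument showing $\lambda+A_*$ stays invertible on that half-plane once the $O(\eta_*)$ perturbation $-(a(u_*)-a_0)\partial_x^2+B_*$ has small enough relative size. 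With those two repairs the argument goes through.
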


\begin{proof}
Let $w^0=(h^0,m^0,\Gamma^0)\in PL_2$ be arbitrary and let $w(t):=e^{-tA_*}w^0$, $t\ge 0$, be the unique solution in $PL_2$ to
the linear equation
$$
\dot{w}+A_*w=0\ ,\quad t>0\ ,\qquad w(0)=w^0\ .
$$
Writing $w=(h,m,\Gamma)$ we have
\beqn\label{59}
\langle h(t)\rangle =0\ ,\quad \langle m(t)+\Gamma(t)\rangle=0\ ,\qquad t\ge 0\ ,
\eeqn
and, by definition of $A_*$ and \eqref{53},
\begin{align*}
&\partial_t h-\partial_x\left(\frac{G}{3}\,h_*^3\,\partial_x h-\frac{1}{2}\, h_*^2\,\sigma'(\Gamma_*)\,\partial_x\Gamma\right)=0\ ,\\
&\partial_tm-\partial_x\left(\frac{G}{3}\,h_*^2\, m_*\,\partial_x h-\frac{1}{2}\,h_*\, m_*\,\sigma'(\Gamma_*)\,\partial_x\Gamma+\delta\, \partial_x m-\delta\,\frac{m_*}{h_*}\,\partial_x h\right)\\
&\qquad\qquad\qquad +\frac{K\,\beta}{h_*}\,m-
K\, \Gamma -K\,\beta\,\frac{m_*}{h_*^2}\, h=0\ ,\\
&\partial_t\Gamma-\partial_x\left(\frac{G}{2}\,h_*^2\,\Gamma_*\,\partial_x h+\big(D - h_*\,\Gamma_*\,\sigma'(\Gamma_*)\,\big)\,\partial_x\Gamma\right) -\frac{K\,\beta}{h_*}\,m+ K\, \Gamma +K\,\beta\,\frac{m_*}{h_*^2}\, h=0\ ,
\end{align*}
for $x\in (0,L)$ and $t>0$ subject to
\bqnn
\partial_xh=\partial_x m=\partial_x\Gamma=0\ ,\quad x=0, L\ ,\quad t>0\ .
\eqnn
Given $q>0$ we define the symmetric matrix
$$
b_q(h_*,m_*,\Gamma_*):=\left(\begin{matrix} 
\displaystyle{\frac{q\,G}{3}\,h_*^3} & \displaystyle{\frac{\beta\, G}{6}\,h_*^2\,m_* - \frac{\delta\,\beta}{2}\, \frac{m_*}{h_*}} & \displaystyle{\frac{G}{4}\,h_*^3\,\Gamma_*-\frac{q}{4}\,h_*^2\,\sigma'( \Gamma_*)}\\  
 & & \\
\displaystyle{\frac{\beta\, G}{6}\, h_*^2\, m_* - \frac{\delta\,\beta}{2}\,\frac{m_*}{h_*}} & \displaystyle{\delta\,\beta} &\displaystyle{-\frac{\beta}{4}\, m_*\, h_*\, \sigma'(\Gamma_*)}\\ 
 & & \\
\displaystyle{\frac{G}{4}\, h_*^3\,\Gamma_*-\frac{q}{4}\,h_*^2\,\sigma'( \Gamma_*)} & \displaystyle{-\frac{\beta}{4}\, m_*\, h_*\, \sigma'(\Gamma_*)} & \displaystyle{D\, h_*-h_*^2\,\Gamma_*\, \sigma'(\Gamma_*)} 
\end{matrix}\right)\ .
$$
Then, multiplying the equations satisfied by $h$, $m$, and $\Gamma$ by $q h$, $\beta m$, and $h_*\Gamma$, respectively, we derive the equality
\beqn\label{60}
\begin{split}
\frac{1}{2}\frac{\rd }{\rd t}\left(q\,\|h\|_2^2+\beta \,\|m\|_2^2+h_*\,\|\Gamma\|_2^2\right)& +\left(b_q(h_*,m_*,\Gamma_*)\partial_x w \Big\vert \partial_x w \right)_2\\
&+\frac{K}{h_*}\,\|\beta\, m-h_*\Gamma\|_2^2 +\frac{K\,\beta\, m_*}{h_*^2}\big(h\vert h_*\,\Gamma-\beta m\big)_2=0\ ,
\end{split}
\eeqn
where $\|\cdot\|_2$ and $(\cdot\vert\cdot)_2$ denote the norm and inner product in $L_2$.
Next, since
$$
b_q(h_*,0,0)=\left(\begin{matrix} 
\displaystyle{\frac{q\,G}{3}\,h_*^3} & 0 & \displaystyle{- \frac{q}{4}\,h_*^2\,\sigma'(0)}\\  
 & & \\
0 & \delta\,\beta &0 \\ 
 & & \\
\displaystyle{-\frac{q}{4}\,h_*^2\,\sigma'(0)} & 0& D\, h_* 
\end{matrix}\right)\ ,
$$
we have
$$
\mathrm{det}\big(b_q(h_*,0,0)-\lambda\big)=(\delta\,\beta-\lambda)\left[\lambda^2-\left(\frac{q}{3}\,G\,h_*^3+D\,h_*\right)\,\lambda+\frac{q}{3}\,G\,D\,h_*^4-\frac{1}{16}\,q^2\,h_*^4\,\sigma'(0)^2\right]\ ,
$$
so that all eigenvalues of $b_q(h_*,0,0)$ are positive provided that
$$
0<q<\frac{16\,G\,D}{3\,\sigma'(0)^2}\ .
$$
Thus, for $q$ satisfying this condition, there is some $\eps:=\eps(h_*)>0$ such that the matrix $b_q(h_*,m_*,\Gamma_*)$ is positive definite for $0<\eta_*<\eps$ by continuity and the definitions of $m_*$ and $\Gamma_*$. From \eqref{60} we then derive that
\beqn\label{61}
\begin{split}
\frac{1}{2}\frac{\rd }{\rd t}\left(q\,\|h\|_2^2+\beta\, \|m\|_2^2+h_*\,\|\Gamma\|_2^2\right)& +\mu\left(\|\partial_xh\|_2^2+ \|\partial_xm\|_2^2+\|\partial_x\Gamma\|_2^2\right) +\frac{K}{h_*}\,\|\beta\, m-h_*\Gamma\|_2^2\\
& \le \frac{K\,\beta\, m_*}{2\, h_*^2}\big(\|h\|_2^2+\|\beta\, m-h_*\Gamma\|_2^2\big)\
\end{split}
\eeqn
for some number $\mu:=\mu(h_*)>0$. Recalling \eqref{59} we may apply Poincar\'e's inequality to deduce that
\beqn\label{63}
\|\partial_x m\|_2^2+\|\partial_x\Gamma\|_2^2\ge\frac{1}{2}\,\|\partial_xm+\partial_x\Gamma\|_2^2\ge c\,\|m+\Gamma\|_2^2\ \quad\text{and}\quad \|\partial_x h\|_2^2\ge c\,\|h\|_2^2\ .
\eeqn
Hence, writing
$$
m=\frac{1}{\beta+h_*}\big(\beta\,m-h_*\,\Gamma+h_*\,(m+\Gamma)\big)\ ,\qquad \Gamma=\frac{1}{\beta+h_*}\big(\beta\, (m+\Gamma)-(\beta\,m-h_*\,\Gamma)\big)
$$
we obtain from \eqref{63} that
$$
\| m\|_2^2+\|\Gamma\|_2^2\le c\,\left(\|\beta\, m-h_*\Gamma\|_2^2+\|\partial_x m\|_2^2+\|\partial_x\Gamma\|_2^2\right)\ .
$$
Plugging this into \eqref{61} and making $\eps$ (and hence $m_*\le\eta_*<\eps$ on the right-hand side of \eqref{61}) smaller if necessary, we deduce that
\bqnn
\begin{split}
\frac{1}{2}\,\frac{\rd }{\rd t}\left(q\,\|h\|_2^2+\beta\, \|m\|_2^2+h_*\,\|\Gamma\|_2^2\right)& +\mu_0\left(q\,\|h\|_2^2+\beta\, \|m\|_2^2+h_*\,\|\Gamma\|_2^2\right)\le 0
\end{split}
\eqnn
with a sufficiently small number $\mu_0:=\mu_0(h_*)>0$. This, in turn, readily implies that $w=(h,m,\Gamma)$ has exponential decay, i.e.,
$$
\left\|e^{-t A_*}w^0\right\|_2\le M\,e^{-\omega_0 t}\,\|w^0\|_2\ ,\quad t\ge 0\ ,
$$
for some numbers $M:=M(h_*)\ge 1$ and $\omega_0:=\omega_0(h_*)>0$. This yields the assertion.
\end{proof}

\medskip

Finally, note that the function $F$ on the right-hand side of \eqref{52a} is defined on any sufficiently small neighborhood $\mathcal{V}$ of zero in $PH_N^2$ and that, since $b(u_*)u_*=0$, 
\beqn\label{65}
F\in C^{2-}(\mathcal{V},PL_2)\ ,\quad F(0)=0\ ,\quad F'(0)=0\ ,
\eeqn
with $F'$ denoting the Fr\'echet derivative of $F$. The smallness condition on $\mathcal{V}$ is needed to guarantee that the first component of $u_*+v$ does not vanish so that $b(u_*+v)$ is well-defined and we may in particular choose the zero neighborhood $\mathcal{V}$ in $PH_N^2$ such that
\beqn\label{66}
\|v\|_\infty<\min\{h_*,m_*,\Gamma_*\}\ ,\quad v\in \mathcal{V}\ .
\eeqn
Lemma~\ref{L1}, Lemma~\ref{L2}, and \eqref{65} are then the ingredients to apply the principle of linearized stability \cite[Thm.~9.1.2]{Lunardi} to \eqref{52a}, \eqref{52b}. We thus conclude that given any $\omega\in (0,\omega_0)$ there exists $R>0$ such that for any initial value $v^0$ belonging to a ball in $PH_N^2$ centered at zero of sufficiently small radius, the problem \eqref{52a}, \eqref{52b} admits a unique global solution $v\in C^1([0,\infty),PL_2)\cap C([0,\infty),PH_N^2)$ with $v(t)\in \mathcal{V}$ for $t\ge 0$ and
$$
\|v(t)\|_{PH_N^2}+\|\dot{v}(t)\|_{PL_2}\le R\, e^{-\omega t}\, \|v^0\|_{PH_N^2}\ ,\quad t\ge 0\ .
$$
Therefore, $u:=[t\mapsto v(t)+u_*]$ is the unique solution to \eqref{30}. Notice that \eqref{66} and $v(t)\in \mathcal{V}$ for $t\ge 0$ warrant the positivity of $u$. 

Summarizing we have shown the following result on the local asymptotic stability of steady states for \eqref{1A}-\eqref{4AA} with small surfactant concentration:

\begin{thm}\label{T2}
Let $\sigma\in C^2(\R)$ be decreasing. Let $h_*>0$ be arbitrary. Then there exist numbers \mbox{$\eps:=\eps(h_*)>0$}, $\omega:=\omega(h_*)>0$, and $M:=M(h_*)>0$ such that for
$$
m_*:=\frac{h_*}{\beta+h_*}\, \eta_*\ ,\quad \Gamma_*:=\frac{\beta}{\beta+h_*}\, \eta_*\ 
$$
with $\eta_*\in (0,\eps)$ and any initial value $(h^0,m^0,\Gamma^0)\in H_N^2$ with $\langle h^0\rangle =h_*$ and $\langle m^0+\Gamma^0\rangle=\eta_*$ satisfying the smallness condition
$$
\|h^0-h_*\|_{H^2}+\|m^0-m_*\|_{H^2}+\|\Gamma^0-\Gamma_*\|_{H^2}\le \eps\ ,
$$
there is a unique global positive solution
$
(h,m,\Gamma)\in C^1([0,\infty),L_2)\cap C([0,\infty),H_N^2)
$
to \eqref{1A}-\eqref{4AA}. This solution satisfies
\bqnn
\begin{split}
\|h(t)-h_*\|_{H^2}\,+\,&\|m(t)-m_*\|_{H^2}\,+\,\|\Gamma (t)-\Gamma_*\|_{H^2}\\
& \le M\,e^{-\omega t}\,\big(\|h^0-h_*\|_{H^2}+\|m^0-m_*\|_{H^2}+\|\Gamma^0-\Gamma_*\|_{H^2}\big)
\end{split}
\eqnn
for $t\ge 0$.
\end{thm}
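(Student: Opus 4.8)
The proof is essentially a matter of assembling the ingredients prepared above: one rewrites \eqref{1A}--\eqref{4AA} as the semilinear problem \eqref{52a}--\eqref{52b} on the phase space $PH_N^2$, checks that the principle of linearized stability applies, and then transfers the resulting exponential estimate back to the original unknowns. \emph{Reduction.} Given $u^0=(h^0,m^0,\Gamma^0)\in H_N^2$ with $\langle h^0\rangle=h_*$ and $\langle m^0+\Gamma^0\rangle=\eta_*$, the very definition of $P$ gives $(1-P)u^0=u_*=(1-P)u_*$, whence $v^0:=u^0-u_*\in PH_N^2$, and there is $c=c(h_*)\ge 1$ with
$$
c^{-1}\,\|v^0\|_{PH_N^2}\le\|h^0-h_*\|_{H^2}+\|m^0-m_*\|_{H^2}+\|\Gamma^0-\Gamma_*\|_{H^2}\le c\,\|v^0\|_{PH_N^2}\ .
$$
By \eqref{51}, any strong solution $u$ of \eqref{30} with such initial data satisfies $(1-P)u(t)=u_*$ for all $t$, so $u=v+u_*$ with $v:=Pu$ solving \eqref{52a}--\eqref{52b}; conversely, if $v$ solves \eqref{52a}--\eqref{52b} and stays in the neighbourhood $\mathcal{V}$ of \eqref{66}, then $u:=v+u_*$ is a strictly positive solution of \eqref{30}, hence of \eqref{1A}--\eqref{4AA}.

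\emph{The linearized problem.} Lemma~\ref{L1} yields $A_*\in\mathcal{H}(PH_N^2,PL_2)$, so $-A_*$ generates a strongly continuous analytic semigroup on $PL_2$. Fixing $\eta_*\in(0,\eps)$ with $\eps$ from Lemma~\ref{L2}, the spectral bound $\sigma(-A_*)\subset[\mathrm{Re}\,\lambda\le-\omega_0]$ holds, so $0$ lies in the resolvent set of $A_*$ and $e^{-tA_*}$ decays exponentially with rate $\omega_0$. Finally, on $\mathcal{V}$ the first component of $u_*+v$ stays bounded away from zero, so $b(u_*+v)$ is well defined, and \eqref{65} records that the nonlinearity $F$ in \eqref{52a} belongs to $C^{2-}(\mathcal{V},PL_2)$ with $F(0)=0$ and $F'(0)=0$.

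\emph{Linearized stability and conclusion.} With the previous two points in hand, \cite[Thm.~9.1.2]{Lunardi} applies: for every $\omega\in(0,\omega_0)$ there are $R=R(h_*)>0$ and $r_0=r_0(h_*)>0$ such that whenever $\|v^0\|_{PH_N^2}\le r_0$, problem \eqref{52a}--\eqref{52b} has a unique global solution $v\in C^1([0,\infty),PL_2)\cap C([0,\infty),PH_N^2)$ with $v(t)\in\mathcal{V}$ for all $t\ge0$ and $\|v(t)\|_{PH_N^2}+\|\dot v(t)\|_{PL_2}\le R\,e^{-\omega t}\,\|v^0\|_{PH_N^2}$. Shrinking $\eps$ so that $c\,\eps\le r_0$, that $\eps$ does not exceed the radius of $\mathcal{V}$, and that $\eps$ stays below the threshold of Lemma~\ref{L2}, the smallness assumption on $u^0$ forces $\|v^0\|_{PH_N^2}\le r_0$; then $u:=v+u_*$ is the desired global positive solution in $C^1([0,\infty),L_2)\cap C([0,\infty),H_N^2)$, uniqueness in this class following from the reduction since for small data the solution must stay in $u_*+\mathcal{V}$ by continuity. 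The stated decay estimate for $(h,m,\Gamma)-(h_*,m_*,\Gamma_*)$ then follows from the one for $v$ after absorbing the constant $c^2$ into $M$.

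\emph{Main obstacle.} In fact the substantive work is already done: analyticity of the linearization (Lemma~\ref{L1}) and, above all, the uniform spectral gap of $-A_*$ for small $\eta_*$ (Lemma~\ref{L2}), where the smallness of the surfactant concentration is essential, have been established, and the theorem is a bookkeeping combination of these with \cite[Thm.~9.1.2]{Lunardi}. The only genuinely technical verification that remains implicit is \eqref{65}, namely that $F(v)=\big(A_*-A(u_*+v)|_{PH_N^2}\big)v-b(u_*+v)u_*$ is twice continuously (Hölder-)differentiable from $PH_N^2$ into $PL_2$ near $0$; this rests on the embedding $H^2\hookrightarrow C^1([0,L])$, the Banach-algebra structure of $H^{2\alpha}$ for $\alpha>3/4$, smoothness of the maps $r\mapsto 1/r$ and $\sigma\in C^2(\R)$, and the lower bound on $h_*+v_1$ on $\mathcal{V}$. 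No further obstacle is anticipated.
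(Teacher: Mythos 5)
Your proposal is correct and follows essentially the same route as the paper: reduce via the projection $P$ to the semilinear problem \eqref{52a}--\eqref{52b} on $PH_N^2$, invoke Lemma~\ref{L1} for generation, Lemma~\ref{L2} for the spectral gap at small $\eta_*$, and \eqref{65} for the nonlinearity, then apply \cite[Thm.~9.1.2]{Lunardi} and transfer the decay estimate back to $(h,m,\Gamma)$. The additional remarks on norm equivalence, the choice of $\eps$, and the verification of \eqref{65} via the algebra structure of $H^2$ are consistent elaborations of what the paper leaves implicit.
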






\begin{thebibliography}{99}

\bibitem{AmannTeubner}
H. Amann. \textit{Nonhomogeneous Linear and Quasilinear Elliptic
and Parabolic Boundary Value Problems.} In: H.~Schmeisser, H.~Triebel (eds.), Function Spaces, Differential Operators and
Nonlinear Analysis. Teubner-Texte zur Math. {\bf 133}, 9--126, Teubner,
Stuttgart, Leipzig 1993.

\bibitem{LQPP}
H. Amann. \textit{Linear and Quasilinear Parabolic Problems,
{Volume} {I}: Abstract Linear Theory.} Birkh\"auser, Basel,
Boston, Berlin 1995.

\bibitem{BarrettGarcke}
J.W. Barrett, H. Garcke, R. N\"urnberg. \textit{Finite element approximation of surfactant spreading on a thin film.} SIAM J. Numer. Anal. {\bf 41} (2003), 1427-1464.

\bibitem{Bothe1}
D. Bothe, J.Pr\"u\ss , G. Simonett.
\textit{Well-posedness of a two-phase flow with soluble surfactant.} Nonlinear elliptic and parabolic problems, 37--61,
Progr. Nonlinear Differential Equations Appl. {\bf 64}, Birkh\"auser, Basel, 2005. 

\bibitem{Bothe2}
D. Bothe, J. Pr\"u\ss .
\textit{Stability of equilibria for two-phase flows with soluble surfactant.} Preprint.

\bibitem{deWitGallez}
A. De Wit, D. Gallez, C.I. Christov. \textit{Nonlinear evolution equations for thin liquid films with insoluble surfactant.} Phys. Fluids {\bf 6} (1994), 3256-3266.

\bibitem{EHLW}
J. Escher, M. Hillairet, Ph. Lauren\c{c}ot, Ch. Walker. \textit{Global weak solutions for a degenerate parabolic system modeling the spreading of insoluble surfactant.} In preparation.

\bibitem{GarckeWieland}
H. Garcke, S. Wieland. \textit{Surfactant spreading on thin viscous films: nonnegative solutions of a coupled degenerate system.} SIAM J. Math. Anal. {\bf 37} (2006), 2025-2048.

\bibitem{Grotberg}
J.B. Grotberg. \textit{Pulmonary flow and transport phenomena.} Annu. Rev. Fluid Mech. {\bf 26} (1994), 529-571.

\bibitem{GruenLenz}
G. Gr\"un, M. Lenz, M. Rumpf. \textit{A finite volume scheme for surfactant driven thin film flow.} Finite volumes for complex applications, III (Porquerolles, 2002), 553--560, Hermes Sci. Publ., Paris, 2002. 

\bibitem{GP08}
M. G\"unther, G. Prokert.
{\em{A justification for the thin film approximation
of Stokes flow with surface tension.}}
J. Diff. Equ. {\bf{245}} (2008) 2802--2845. 

\bibitem{GrotbergJensen92}
O.E. Jensen, J.B. Grotberg. \textit{Insoluble surfactant spreading on a thin viscous film: shock evolution and film rupture.} J. Fluid Mech. {\bf 240} (1992), 259-288.

\bibitem{GrotbergJensen93}
O.E. Jensen, J.B. Grotberg. \textit{The spreading of heat or soluble surfactant along a thin liquid film.} Phys. Fluids A {\bf 5} (1993), 58-68.

\bibitem{Shearer1}
R. Levy, M. Shearer, T.P. Witelski. \textit{Gravity-driven thin liquid films with insoluble surfactant: smooth traveling waves.} European J. Appl. Math.{\bf 18} (2007), 679-708.

\bibitem{Shearer2}
R. Levy, M. Shearer. \textit{The motion of a thin liquid film driven by surfactant and gravity.} SIAM J. Appl. Math. {\bf 66} (2006), 1588--1609.

\bibitem{LinHwang00}
Ch.-K. Lin, Ch.-Ch. Hwang, G.-J. Huang, W.-Y. Uen. \textit{Nonlinear spreading dynamics of a localized soluble surfactant on a thin liquid film.} J. Phys. Soc. Jap. {\bf 71} (2002), 2708-2714.

\bibitem{LinHwang02}
Ch.-K. Lin, Ch.-Ch. Hwang, W.-Y. Uen. \textit{A nonlinear rupture theory of thin liquid films with soluble surfactant.} Phys. Fluids A {\bf 5} (1993), 58-68.

\bibitem{Lunardi}
A. Lunardi. \textit{Analytic Semigroups and Optimal Regularity in Parabolic Problems.} Progress in Nonlinear Differential Equations and their Applications {\bf 16}. Birkh\"auser Verlag, Basel, 1995.

\bibitem{Matar02}
O.K. Matar. \textit{Nonlinear evolution of thin free viscous films in the presence of soluble surfactant. Phys. Fluids {\bf 14} (2002), 4216-4234.}

\bibitem{Renardy96a}
M. Renardy. \textit{A singularly perturbed problem related to surfactant spreading on thin films.} Nonlinear Anal. {\bf 27} (1996), 287--296. 

\bibitem{Renardy96b}
M. Renardy. \textit{On an equation describing the spreading of surfactants on thin films.} Nonlinear Anal. {\bf 26} (1996), 1207--1219. 

\bibitem{Renardy97}
M. Renardy. \textit{A degenerate parabolic-hyperbolic system modeling the spreading of surfactants.}  SIAM J. Math. Anal. {\bf 28} (1997), 1048--1063. 

\bibitem{SharmaRuckenstein86}
A. Sharma, E. Ruckenstein. \textit{Rupture of thin free films with insoluble surfactants: nonlinear aspects.} AIChE Symp. Ser. No 252, {\bf 82} (1986), 129-144.
 
\bibitem{WarnerCraster}
M.R.E. Warner, R.V. Craster, O.K. Matar. \textit{Fingering phenomena created by a soluble surfactant deposition on a thin liquid film.} Phys. Fluids {\bf 16} (2004), 2933-2951.

\bibitem{Shearer3}
T.P. Witelski, M. Shearer, R. Levy. \textit{Growing surfactant waves in thin liquid films driven by gravity.}  AMRX Appl. Math. Res. Express 2006, Art. ID 15487, 21 pp. 

\end{thebibliography}
\end{document}